\documentclass[12pt]{amsart}
\usepackage{tgtermes}

\usepackage[margin=2cm,a4paper]{geometry}
\usepackage{amsmath,amssymb,amsthm}
\usepackage{microtype}
\usepackage[hidelinks]{hyperref}
\usepackage[foot]{amsaddr}
\usepackage{enumitem}
\hypersetup{
  pdftitle={Coloring graphs with no long induced path},
  pdfkeywords={induced paths, chromatic number, clique number, chi-boundedness},
  pdfauthor={Sang-il Oum}
}
\usepackage{keytheorems}
\usepackage{zref-clever}
\zcsetup{cap=true}
\newkeytheorem{theorem}[name=Theorem,refname={Theorem,Theorems}, Refname={Theorem,Theorems}]
\newkeytheorem{lemma,proposition,corollary,conjecture,problem}[sibling=theorem]
\newkeytheorem{remark}[sibling=theorem,style=remark,name=Remark,refname={Remark,Remarks},Refname={Remark,Remarks}]

\begin{document}
\title{Coloring graphs with no long induced path}
\author{Sang-il Oum}
\address{Discrete Mathematics Group, Institute for Basic Science (IBS), Daejeon,~South~Korea.}
\address{Department of Mathematical Sciences, KAIST, Daejeon, South Korea.}
\thanks{Supported by the Institute for Basic Science (IBS-R029-C1).}
\date{September 11, 2026}

\begin{abstract}
Let $P_t$ denote the induced path on $t$ vertices.
Let $\omega(G)$ denote the maximum number of vertices in a clique of a graph $G$.
Gy\'arf\'as (1987) proved that
every $P_t$-free graph $G$ satisfies
$\chi(G)\le(t-1)^{\omega(G)-1}$,
and Gravier, Ho\`ang, and Maffray (2003) improved this to
$\chi(G)\le (t-2)^{\omega(G)-1}$ for $t\ge4$.
We lower the base of the exponential by one:
for every $t\ge5$, every $P_t$-free graph $G$ satisfies
\[
\chi(G)\le 3\,(t-3)^{\omega(G)+4}.
\]
The proof combines two refinements of the Gy\'arf\'as path argument
and was developed with the assistance of Claude Fable 5.1 of Anthropic and GPT Pro of OpenAI.
\end{abstract}
\maketitle

\section{Introduction}
All graphs are finite and simple. We write $P_t$ for the path on $t$
vertices, and a graph is \emph{$P_t$-free} if it has no induced subgraph
isomorphic to $P_t$.
The \emph{chromatic number} $\chi(G)$ is the minimum number of colors needed to color the vertices of $G$ so that adjacent vertices receive distinct colors.
The \emph{clique number} $\omega(G)$ of $G$ is the maximum number of vertices of $G$ pairwise adjacent.
For $S\subseteq V(G)$ we write $\chi(S)=\chi(G[S])$ and
$\omega(S)=\omega(G[S])$, both zero for $S=\varnothing$.
For a vertex $v$ of $G$,
$N(v)$ is the set of all neighbors of~$v$, and
$N[v]=N(v)\cup\{v\}$. Disjoint sets are \emph{complete} to each other if
every possible edge between them is present and \emph{anticomplete} if none
is.
Throughout the paper, $t$ denotes a fixed integer with $t\ge5$, and we write $d=t-3\ge2$.
For a nonempty graph $H$, a set $R\subseteq V(H)$ is \emph{clique-reducing} in $H$ if
$\omega(H[R])<\omega(H)$; equivalently, $V(H)\setminus R$ meets every maximum clique of $H$.

A hereditary class $\mathcal C$ of graphs is \emph{$\chi$-bounded} if
there is a function $f$ such that
for every graph $G\in \mathcal C$, $\chi(G)\le f(\omega(G))$.
Such a function $f$ is called a \emph{$\chi$-binding function} for $\mathcal C$.
If $f$ can be taken as a polynomial, then we say that $\mathcal C$ is \emph{polynomially $\chi$-bounded}.

Gy\'arf\'as~\cite{Gyarfas1987} proved that
$P_t$-free graphs are $\chi$-bounded, with $\chi(G)\le(t-1)^{\omega(G)-1}$,
and Gravier, Ho\`ang, and Maffray~\cite[Corollary~1.4]{GHM2003} improved this to
$(t-2)^{\omega(G)-1}$ for $t\ge4$. Whether $P_t$-free graphs are
\emph{polynomially} $\chi$-bounded is open for every $t\ge6$.
For $t=5$, Scott, Seymour, and Spirkl~\cite{SSS2023} proved the
near-polynomial bound $\chi(G)\le\omega(G)^{\log_2\omega(G)}$ when
$\omega(G)\ge3$, and very recently Nguyen~\cite{Nguyen2025} proved a
polynomial bound. Our result is therefore of interest mainly for $t\ge6$.

We do not address the polynomial
question here. We show instead that the exponential base $t-2$ can be
lowered to $t-3$ for every $t\ge5$, at the cost of a constant factor
depending only on~$t$.

\begin{theorem}\label{thm:main}
  Let $t\ge5$ be an integer and let $d=t-3$.
  Every $P_t$-free graph $G$ satisfies
  $\chi(G)\le \kappa_d\, d^{\,\omega(G)-1}$, where
  \[
    \kappa_d=2d^5+2d^4-3d^3-d^2+2d+4+\frac{6d^2-6d+4}{(d-1)^3}\le 3d^5.
  \]
  In particular, $\chi(G)\le 3(t-3)^{\omega(G)+4}$.
\end{theorem}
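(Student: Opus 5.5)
We will prove the statement by induction on $\omega(G)$, aiming at the stronger claim that every $P_t$-free graph $G$ with $\omega(G)=k$ satisfies $\chi(G)\le \kappa_d d^{k-1}$. For small $k$ (say $k\le 5$) the bound follows from Gravier, Ho\`ang, and Maffray: $(t-2)^{k-1}=(d+1)^{k-1}\le\kappa_d d^{k-1}$ whenever $k-1\le 4$, because $(1+1/d)^{4}\le 2d$ for $d\ge 2$. The constant $\kappa_d\approx 2d^5$ is in fact chosen to absorb this base case. For the inductive step we may assume $G$ is connected. The goal is to partition $V(G)$ into a bounded number of pieces that together cost at most $d$ times the induction hypothesis, plus a lower-order additive term. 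That additive term is what the geometric-series part $\frac{6d^2-6d+4}{(d-1)^3}$ of $\kappa_d$ is meant to swallow.

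The skeleton is the Gy\'arf\'as path argument. We fix a vertex $v_1$ and grow an induced path $v_1v_2\cdots v_m$ with $m\le t-1$. At each stage we maintain a connected set $C_i$ anticomplete to $v_1,\dots,v_{i-1}$ and with large chromatic number, and we choose $v_{i+1}\in N(v_i)\cap N(C_i)$. The vertices peeled off at stage $i$ are the neighbours of $v_i$ outside the current component. Each such set lies in $N(v_i)$, so it has clique number at most $k-1$ and costs $f(k-1)$. The classical count charges one neighbourhood per path vertex, giving roughly $(t-2)$ copies of $f(k-1)$. To save one factor we use two refinements.

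\textbf{First refinement.} The first neighbourhood $N(v_1)$ need not be paid in full. We choose $v_1$ inside a maximum clique, or better, use a clique-reducing decomposition. We split $N(v_1)$ into a part $R$ that is clique-reducing in $G[N(v_1)]$, so that $\omega(R)\le k-2$, and a remainder. The remainder is handled together with the next stage, because vertices meeting every maximum clique of $N(v_1)$ are forced to interact with $v_2$'s neighbourhood. The set $R$ costs only $f(k-2)=f(k-1)/d$, which is lower order.

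\textbf{Second refinement.} At the last step, near length $t-1$, the final set $C_{t-2}$ together with the last neighbourhood can be coloured jointly. The reason is that $P_t$-freeness forces $C_{t-2}$'s attachments to be dominated by $N(v_{t-2})$, so only one of the last two neighbourhoods needs a full $f(k-1)$. Summing, we should get
\[
f(k)\le d\,f(k-1)+c_d f(k-2)+c'_d
\]
with explicit $c_d,c'_d$ polynomial in $d$. Unrolling this recursion yields $\kappa_d d^{k-1}$ with $\kappa_d$ of the stated form. The $(d-1)^{-3}$ term arises from summing $\sum_j j^2 d^{-j}$-type tails, and the final estimate $\kappa_d\le 3d^5$ is a routine check at $d\ge 2$.

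The main obstacle is making the two savings compatible within one path argument. The first refinement perturbs how $v_2$ and $C_1$ are chosen, while the second needs $C_{t-2}$ to still have large chromatic number and be connected. I would first try to make both savings local: do the clique-reducing split only at $v_1$, and the joint last-step colouring only at $v_{t-2}$. Then I would verify that the intermediate stages $2,\dots,t-3$ contribute exactly $d-1$ copies of $f(k-1)$. If the savings interfere, the fallback is to pay an extra $f(k-2)$ term per interference, which the recursion tolerates.
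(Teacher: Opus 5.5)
There is a genuine gap, and it sits exactly where you say the bound comes out. A recursion $f(k)\le d\,f(k-1)+c_d f(k-2)+c'_d$ with $c_d>0$ does not unroll to $O(d^{\,k-1})$. Its dominant characteristic root is $\tfrac12\bigl(d+\sqrt{d^2+4c_d}\bigr)>d$. The term $c_d f(k-2)\approx (c_d/d)f(k-1)$ is a constant fraction of the main term, not lower order, so it raises the exponential base. In normalized form $h_k=f(k)/d^{\,k-1}$, your increments are $h_k-h_{k-1}\approx (c_d/d)h_{k-2}$, and these do not form a convergent series.

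For the same reason, the induction cannot be run with $\chi\le\kappa_d d^{\,k-1}$ as the hypothesis. Indeed $d\cdot\kappa_d d^{\,k-2}$ plus any positive error exceeds $\kappa_d d^{\,k-1}$. You need an auxiliary sequence whose normalized increments are summable.

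Your first refinement is also unsupported: nothing forces $N(v_1)\setminus R$ to be absorbed by later stages. In the paper the saving is made at the other end. The path is stopped at $u_{t-3}$, so only $t-4=d-1$ neighborhoods are paid in full, and the leftover connected piece $D$ must be colored with $F_{k-1}$ plus a small error. There, with $A=N(u_{t-3})\cap V(D)$, $P_t$-freeness makes each component $F$ of $D\setminus A$ complete to every vertex of $A$ that has a neighbor in $F$. Hence $A\cup\bigcup_F R_F$ still has clique number below $k$ (Lemma~\ref{lem:prefix}). Even so, clique reduction alone only yields $a_k=(t-2)a_{k-1}-a_{k-2}$. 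Its base $\tfrac12\bigl(t-2+\sqrt{t(t-4)}\bigr)$ is still larger than $t-3$, which is the same kind of outcome your recursion produces.

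The missing idea is an error term that is exponentially smaller than $d^{\,k-1}$. The paper obtains it by a large--small split of the components $F$ at a threshold $m$, where $U$ is the set of vertices of $A$ attached to a large component.
\begin{itemize}
\item The set $Z=(A\setminus U)\cup\bigcup_{F\text{ large}}V(F)\cup\bigcup_{F\text{ small}}R_F$ has clique number below $k$ and costs $F_{k-1}$.
\item The rest of the small components costs $B_m\le F_m$.
\item $U$ is covered by vertices $s_F$ with $\omega(N(s_F)\cap U)\le k-m-1$.
\end{itemize}
The covered-set Lemma~\ref{lem:covered} runs clique reduction inside a rooted path argument. It colors $U$ with $L_mF_{k-m-1}$ colors, where $L_m=O(\rho_d^{\,m})$ with $\rho_d=\tfrac12\bigl(d+\sqrt{d^2-4}\bigr)<d$ (and $L_m=\binom{m+2}{2}$ for $d=2$).

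This gives $F_k\le dF_{k-1}+F_m+L_mF_{k-m-1}$. The paper first proves the a priori bound $F_k\le\binom{k+1}{2}d^{\,k-1}$ and then chooses $m=\lfloor (k-1)/2\rfloor$. With these, the normalized increments are summable, and their sum is exactly $\kappa_d$.

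So the $d^5$ in $\kappa_d$ comes from evaluating $\sum_j (j+1)^2L_jz^{j+1}$ at $z=1/d$, not from absorbing a base case. Incidentally, your base-case inequality $(1+1/d)^4\le 2d$ fails at $d=2$, although the base case itself is harmless since $\kappa_2=92$.
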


We found no general $\chi$-binding function in the literature for
$P_t$-free graphs with exponential base below $t-2$, for any $t\ge6$.\footnote{We note that two surveys
\cite[Theorem~8]{SR2019} and \cite[Section~6]{CK2025} display the bound
$4\cdot3^{\omega-1}$ for $P_6$-free graphs with attribution to~\cite{GHM2003};
but \cite{GHM2003} proves $(\mu-1)^{\omega-1}$, that is $4^{\omega-1}$ for
$P_6$-free graphs.}
The constant $\kappa_d$ is not optimized.
Our bound comes from an explicit recurrence, which we now describe, and
which gives better constants for small $t$.
Define integers~$p_j$ and~$L_m$ by
\begin{equation}\label{eq:pL}
  p_0=0,\quad p_1=1,\quad p_{j+1}=d\,p_j-p_{j-1}\quad(j\ge1),
  \qquad
  L_m=\sum_{j=1}^{m+1}p_j\quad(m\ge0),
\end{equation}
and integers $F_k$, $B_k$ by $F_0=B_0=0$, $F_1=B_1=1$, and, for $k\ge2$,
\begin{align}
  T_k&=\min\Bigl\{B_{k-1},\;
    \min_{0\le m\le k-2}\bigl(B_m+L_mF_{k-m-1}-p_mF_{k-m-2}\bigr)\Bigr\},
    \label{eq:T}\\
  B_k&=(d-1)F_{k-1}+T_k,\label{eq:B}\\
  F_k&=F_{k-1}+B_k.\label{eq:FB}
\end{align}

\begin{theorem}\label{thm:rec}
  Let $t\ge5$ be an integer.
  Every $P_t$-free graph $G$ with $\omega(G)=k\ge1$ has a clique-reducing
  set $R$ with $\chi(G\setminus R)\le B_k$.
  Consequently, $\chi(G)\le F_k$.
\end{theorem}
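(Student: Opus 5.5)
We argue by induction on $k$. The cases $k\le1$ are trivial: for $k=1$ take $R=\varnothing$, so $\chi(G)\le1=B_1$. The ``consequently'' part follows formally from the first part. If $R$ is clique-reducing, then $\omega(R)\le k-1$, so by induction $\chi(R)\le F_{k-1}$, using that $F$ is nondecreasing because $B_k\ge0$. Hence $\chi(G)\le\chi(G\setminus R)+\chi(R)\le B_k+F_{k-1}=F_k$, which is \eqref{eq:FB}. So the real work is, for $k\ge2$, to produce a clique-reducing $R$ with $\chi(G\setminus R)\le(d-1)F_{k-1}+T_k$. We may assume $G$ is connected, since clique-reducing sets of the components of clique number $k$ can be united and colors reused across components.

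We run the Gy\'arf\'as path argument. Grow an induced path $v_1v_2\cdots$ together with nested connected sets $C_1\supseteq C_2\supseteq\cdots$. Here $C_{i+1}$ is a component of $C_i\setminus N[v_i]$ chosen to contain a maximum clique of the current graph, or to have largest chromatic number, and $v_{i+1}$ is a neighbor of $v_i$ with a neighbor in $C_{i+1}$. The vertices peeled off at step $i$ lie in $N(v_i)$, whose clique number is at most $k-1$, so each step costs at most $F_{k-1}$ colors by induction. Components not chosen are handled by recursion or absorbed into $R$ when they miss every maximum clique. Since $G$ is $P_t$-free, the path has fewer than $t$ vertices. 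The standard argument pays $F_{k-1}$ for each of roughly $t-2=d+1$ neighborhoods. We instead pay full price $F_{k-1}$ only for the first $d-1$ of them, which gives the term $(d-1)F_{k-1}$ in \eqref{eq:B}. The last few layers of the path go into either $R$ or a cheaper set, and their cost is what $T_k$ must bound.

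For the tail we use two refinements, matching the two terms in \eqref{eq:T}. In the first, near the end of the path the remaining set lies in a single neighborhood $N(v_j)$ of clique number $k-1$. Apply the theorem inductively there to get a clique-reducing set $R'$ of that subgraph, and place $R'$ into $R$. The rest then costs $B_{k-1}$ rather than $F_{k-1}$. We must check that $R$ remains clique-reducing in $G$, i.e.\ that every maximum clique of $G$ meets $V(G)\setminus R$. Here the choice of $C_i$ as containing a maximum clique is essential. In the second, we branch at depth $m\le k-2$. We apply the clique-reducing statement at clique number $m$ (cost $B_m$) to a part whose clique number has dropped to $m$. The complementary vertices have clique number $k-m-1$ or $k-m-2$, and a secondary induced path inside them is analysed layer by layer. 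Vertices at distance $j$ along this secondary path are charged with weight $p_j$. The recurrence $p_{j+1}=dp_j-p_{j-1}$ arises because a layer is charged $d$ times the previous one, minus the overlap already counted two layers back, which exists because the path is induced. Summing the layers gives $L_mF_{k-m-1}-p_mF_{k-m-2}$. The subtracted term comes from the last layer, which has one less in clique number.

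The main obstacle is the second refinement. We must set up the secondary path so that the layer charges really obey the recurrence $p_{j+1}=dp_j-p_{j-1}$. Otherwise, chaining the secondary path with the primary path could produce an induced $P_t$ only with more vertices than are available. Concretely, we first aim to prove an auxiliary lemma by induction on $j$. It says that the set of vertices reachable at layer $j$ admits a coloring, or a clique-reducing decomposition, with the stated weighted count, and that the interaction with the primary path $v_1\cdots v_{d}$ is confined to $N(v_{d})$. The remaining verification is bookkeeping: show that the pieces assembled into $R$ together miss some vertex of every maximum clique, and take the better of the two options, giving the minimum in \eqref{eq:T}.
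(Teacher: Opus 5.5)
You have the right skeleton: a Gy\'arf\'as path, $d-1$ pieces paid at full price $F_{k-1}$, and two tail refinements matching the two candidates in \eqref{eq:T}. But neither refinement comes with a mechanism that works, and the structural fact that drives both is missing.

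\textbf{The missing fact.} Stop the path at exactly $t-3$ vertices $u_1\cdots u_{t-3}$, with $D$ connected and anticomplete to $u_1,\dots,u_{t-4}$, and let $A=N(u_{t-3})\cap V(D)$. Then $P_t$-freeness forces every $a\in A$ to be complete or anticomplete to each component $F$ of $D\setminus A$; otherwise $u_1,\dots,u_{t-3},a,x,y$ is an induced $P_t$. Hence $\omega(F)<k$. Moreover, if $R_F$ is clique-reducing in $F$, then $A\cup\bigcup_F R_F$ still has clique number below $k$: a clique in it lies in $A\cup R_F$ for a single $F$, and its part in $R_F$ can be replaced by a larger maximum clique of $F$. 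This is what saves a full $F_{k-1}$: $D$ costs $F_{k-1}+B_{k-1}$ instead of $2F_{k-1}$.

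\textbf{Your first refinement.} You instead apply induction inside a last neighborhood $N(v_j)$ and add $R'$ to the global $R$. That gives no saving. If $R$ consists only of such pieces, the $d$ earlier neighborhoods are still paid in full, and you get back $F_{k-2}+dF_{k-1}+B_{k-1}=(d+1)F_{k-1}$. If $R$ also contains an early neighborhood, a clique can use vertices of both pieces, since they are not anticomplete. Choosing $C_i$ to contain a maximum clique does not prevent this. In the paper the global set is simply $R=N(u_1)$, fixed before the path is built. So everything runs by contradiction on $\chi(G\setminus N(u_1))$, with no bookkeeping of unchosen components. The set $A\cup\bigcup_F R_F$ is \emph{colored} with $F_{k-1}$ colors, not put into $R$.

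\textbf{Your second refinement.} Charging layers of a secondary path with weights $p_j$ (overlap two layers back ``because the path is induced'') is not an argument, and your auxiliary lemma is never stated. The missing ideas are the large/small split and the covered-set lemma.
\begin{enumerate}
\item Call $F$ large if $\omega(F)>m$. Let $U$ be the set of vertices of $A$ with a neighbor in a large component.
\item Small components get clique-reducing sets, with total cost $B_m$.
\item The large components, $A\setminus U$, and the $R_F$ of small components together have clique number below $k$, so they cost $F_{k-1}$.
\item $U$ may have clique number up to $k-1$, not $k-m-1$ or $k-m-2$. Pick one $s_F$ in each large $F$. Every vertex of $U$ sees some $s_F$, and $\omega(N(s_F)\cap U)\le k-\omega(F)\le q=k-m-1$.
\end{enumerate}
The bound $L_mF_q-p_mF_{q-1}$ on $\chi(U)$ then comes from an induction on the gap $m$, not on path layers. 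For connected $H\subseteq U$ with $\omega(H)=q+m$:
\begin{enumerate}
\item remove $W=N(s)\cap H$, at cost $c=F_q$;
\item apply the rooted clique-reduction lemma with the two-vertex path $s\,v_D$ to each component $D$ of $H\setminus W$;
\item unite the resulting reducing sets.
\end{enumerate}
This gives $\beta_m=c+(d-2)\alpha_{m-1}+\beta_{m-1}$ and $\alpha_m=\alpha_{m-1}+\beta_m$, so $\alpha_m=d\alpha_{m-1}-\alpha_{m-2}+c$. The $-\alpha_{m-2}$ is the clique-reduction saving $\beta_{m-1}=\alpha_{m-1}-\alpha_{m-2}$. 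The $-p_mF_{q-1}$ is the base-level saving $c-b=F_q-B_q=F_{q-1}$. This secondary path lives inside $\{s\}\cup H$ and is never chained with the primary path, so the obstacle you anticipate in your last paragraph does not arise.
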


\zcref{thm:rec} is the graph-theoretic core of the paper; \zcref{thm:main}
follows from it by analyzing the sequence $(F_k)$.

\medskip
\noindent\textbf{Method.}
We describe the classical Gy\'arf\'as path argument and then explain how we
modify it to improve the bound. Let $(c_j)_{j\ge0}$ be an increasing sequence
of prospective bounds. Suppose that $G$ is a connected $P_t$-free graph that
is not $c_k$-colorable, where $k=\omega(G)$, and that every induced subgraph
$H$ of $G$ with $\omega(H)<k$ is $c_{\omega(H)}$-colorable.
The classical Gy\'arf\'as path argument
starts by picking a vertex~$u_1$
and building an induced path from~$u_1$.
Since $\omega(N(u_1))<k$, $G[N(u_1)]$ is $c_{k-1}$-colorable.
Since $G$ is not $c_k$-colorable,
$G\setminus N(u_1)$ has a component $D_1$ that is not
$(c_k-c_{k-1})$-colorable.
We pick a vertex $u_2$ in $N(u_1)$ having a neighbor in $D_1$.
Similarly, for each $i$, suppose that we have an induced path
$u_1u_2\cdots u_{i+1}$ and a connected induced subgraph $D_i$ that contains
a neighbor of $u_{i+1}$, is disjoint from $\bigcup_{j=1}^{i}N(u_j)$,
and is not $(c_k-i c_{k-1})$-colorable. Provided that
$c_k-(i+1)c_{k-1}\ge0$, we can color
$V(D_i)\cap N(u_{i+1})$ with $c_{k-1}$ colors and
pick a component $D_{i+1}$ of $D_{i}\setminus N(u_{i+1})$
and a vertex $u_{i+2}\in N(u_{i+1})\cap V(D_i)$
such that
$u_1u_2\cdots u_{i+2}$ is an induced path,
$D_{i+1}$ is not $(c_k-(i+1)c_{k-1})$-colorable,
and $u_{i+2}$ has a neighbor in $D_{i+1}$ but no other vertex of the path has a neighbor in $D_{i+1}$.

Gy\'arf\'as~\cite{Gyarfas1987} obtained an induced path
$u_1u_2\ldots u_t$ and $D_{t-1}$. This construction can be carried out when
$c_k=(t-1)c_{k-1}$, yielding the bound $(t-1)^{\omega(G)-1}$.

Gravier, Ho\`ang, and Maffray~\cite[Corollaries~1.2 and~1.4]{GHM2003}
obtained their bound through clique-hypergraph coloring: every $P_t$-free
graph has a vertex coloring with $t-2$ colors in which no maximal clique
of size at least two is monochromatic. When $\omega(G)\ge2$, each color
class therefore has clique number less than $\omega(G)$, and induction
gives the bound $(t-2)^{\omega(G)-1}$.
The same bound can be recovered directly from the path construction
above as follows. Since $G$ is $P_t$-free, $D_{t-3}$ has to be complete
to $u_{t-2}$.
Then $D_{t-3}$ is $c_{k-1}$-colorable.
Thus, if $c_k=(t-2)c_{k-1}$, this contradicts the assumption that
$D_{t-3}$ is not $(c_k-(t-3)c_{k-1})$-colorable.
This yields the same bound $(t-2)^{\omega(G)-1}$.

Our argument stops this path one step earlier, at
$u_1u_2\cdots u_{t-3}$ and $D_{t-4}$, so
that only $t-4$ multiples of $c_{k-1}$ are consumed along the path. The
price is that $D_{t-4}$ must then be colored with about $c_{k-1}$ colors,
where the preceding path argument would spend $2c_{k-1}$.
Let $A=N(u_{t-3})\cap V(D_{t-4})$ and let
$\mathcal F$ be the set of components of $D_{t-4}\setminus A$. Since $G$ is
$P_t$-free, a vertex of $A$ with a neighbor in some $F\in\mathcal F$ is
complete to $F$; in particular $\omega(F)<k$ for every $F\in\mathcal F$.
We use two refinements of this situation.

The first refinement is \emph{clique reduction}. Instead of coloring each
$F\in\mathcal F$ completely, we reserve in $F$ a clique-reducing set $R_F$
whose complement is cheap to color. The set $A\cup\bigcup_F R_F$ still has
clique number less than~$k$, so it costs only $c_{k-1}$ colors, and the
complements $F\setminus R_F$ are pairwise anticomplete. Used alone, this
gives the recurrence $a_k=(t-2)a_{k-1}-a_{k-2}$ of \zcref{rem:reduction},
whose base $\tfrac12\bigl(t-2+\sqrt{t(t-4)}\bigr)$ is slightly below $t-2$.

The second refinement is a \emph{large-small split}, appearing 
in the proof of 
Scott, Seymour, and Spirkl~\cite[Lemma~2.1]{SSS2023}.
Fix a threshold $m$
and call $F\in\mathcal F$ \emph{large} if $\omega(F)>m$ and \emph{small}
otherwise. The small components are pairwise anticomplete, so together they
need only $c_m$ colors. The large components, together with the vertices of
$A$ having no neighbor in a large component, induce a graph of clique number
less than~$k$, and so need only $c_{k-1}$ colors. Every part is therefore
cheap except the set $U$ of vertices of $A$ having a neighbor in a large
component. Pick a vertex~$s_F$ in each large component~$F$. A clique of
$N(s_F)\cap U$ is complete to a maximum clique of $F$, which has more than
$m$ vertices, and therefore $\omega(N(s_F)\cap U)<k-m$, whereas
$\omega(G[U])$ is only bounded by $k-1$. In other words, $U$ may have large
clique number, but it is \emph{covered} by vertices that see only small
cliques in it. A rooted form of the Gy\'arf\'as path argument colors such a
covered set $U$ with about $c_{k-m-1}\,d^{\,m}$ colors, so the cost is
exponential in the gap $m$ between the two clique numbers rather than in
$k$. Used alone, this gives the base $t-3$, but with a prefactor that grows
with $\omega(G)$: with $c_j=g_jd^{\,j-1}$ the recurrence becomes roughly
$g_k\le g_{k-1}+g_m/d^{\,k-m}+g_{k-m-1}$, and the last term cannot be made
summable.

The combination is to apply clique reduction \emph{inside} the covered-set
argument. This changes the cost of a covered set from $c_{k-m-1}d^{\,m}$ to
$c_{k-m-1}L_m$, where $L_m$ grows like $\rho_d^{\,m}$ with
$\rho_d=\tfrac12(d+\sqrt{d^2-4})<d$ (and like $\binom{m+2}{2}$ when $d=2$).
Now the loss term $L_m g_{k-m-1}/d^{\,m+1}$ is summable over $k$ with a
balanced choice $m\approx k/2$, and the prefactor stays bounded. This is
\zcref{thm:main}. Keeping track of every saving yields the recurrence
\eqref{eq:T}--\eqref{eq:FB} and \zcref{thm:rec}.

\section{A rooted clique-reduction lemma}\label{sec:prefix}
The following lemma isolates the clique-reduction argument in a form that
remembers the length of the path already built
in the Gy\'arf\'as path argument.

\begin{lemma}\label{lem:prefix}
  Let $G$ be a $P_t$-free graph, let $X\subseteq V(G)$ with
  $\omega(G[X])\le r$, and let $a,b\ge0$ be integers.
  Suppose that every nonempty induced subgraph $H$ of $G[X]$ with
  $\omega(H)<r$
  \begin{itemize}
    \item satisfies $\chi(H)\le a$, and
    \item has a clique-reducing set $R_H$ with $\chi(H\setminus R_H)\le b$.
  \end{itemize}
  Let $p_1p_2\cdots p_\ell$ be an induced path of $G$ with
  $1\le\ell\le t-3$ and $p_\ell\in X$, and let $D$ be a nonempty connected
  induced subgraph of $G[X]$, disjoint from the path, such that $p_\ell$ has
  a neighbor in $D$ and $p_1,\ldots,p_{\ell-1}$ are anticomplete to $D$.
  Then there is $R\subseteq V(D)$ such that
  \[
    \omega(G[R])<r
    \qquad\text{and}\qquad
    \chi(D\setminus R)\le (t-\ell-3)\,a+b.
  \]
\end{lemma}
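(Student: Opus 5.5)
Let $j=t-\ell-3\ge0$ denote the number of further path steps still available. The plan is a downward induction on $\ell$, that is, induction on $j$, combined with the clique-reduction refinement from the introduction.

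\textbf{Step 1: peel off one neighbourhood.} Let $A=N(p_\ell)\cap V(D)$, which is nonempty by hypothesis. Since $\omega(G[X])\le r$, we have $\omega(G[A])<r$, because $p_\ell$ extends any clique of $A$. Let $\mathcal F$ be the set of components of $D\setminus A$. For each $F\in\mathcal F$, $D$ is connected, so some vertex $q\in A$ has a neighbour in $F$. Then $p_1\cdots p_\ell q$ is an induced path: $q\in V(D)$ is anticomplete to $p_1,\ldots,p_{\ell-1}$ and adjacent to $p_\ell$. Moreover $F$ is anticomplete to $p_1,\ldots,p_\ell$, since $F$ avoids $A=N(p_\ell)\cap V(D)$.

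\textbf{Step 2: the case $j\ge1$.} Here $\ell+1\le t-3$, and the induction hypothesis applied to the path $p_1\cdots p_\ell q$ and to $F$ gives $R_F\subseteq V(F)$ with $\omega(R_F)<r$ and $\chi(F\setminus R_F)\le (j-1)a+b$. Set $R=\bigcup_F R_F$. The components $F$ are pairwise anticomplete, so $\omega(G[R])<r$ and $\chi\bigl(\bigcup_F (F\setminus R_F)\bigr)\le (j-1)a+b$. Finally, $A$ is colored with $a$ fresh colors, using the first hypothesis on $H=G[A]$ (note $\omega(A)<r$). This gives $\chi(D\setminus R)\le ja+b$.

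\textbf{Step 3: the base case $j=0$.} Here $\ell=t-3$ and the bound required is only $b$. Take $q\in A$ with a neighbour $f\in F$, and any $f'\in F$ adjacent to $f$, so that $p_1\cdots p_\ell q f f'$ would be an induced path on $t-1$ vertices. This does not yet contradict $P_t$-freeness, so I will use the observation from the introduction instead. If a vertex $q\in A$ has a neighbour in $F$ but is not complete to $F$, then by connectivity of $F$ there are adjacent $f,f'\in F$ with $qf$ an edge and $qf'$ a non-edge. Extending backwards, $p_1\cdots p_{t-3}qff'$ has only $t-1$ vertices. So the completeness claim must rely on $p_1$ itself having a predecessor, or on the path being longer. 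I expect this to be the main obstacle.

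\textbf{Step 4: handling the base case without completeness.} The fallback is to avoid completeness altogether. Put $R=A\cup\bigcup_F R_F$, where $R_F$ is the hypothesised clique-reducing set for $H=F$, valid when $\omega(F)<r$, and $D\setminus R=\bigcup_F(F\setminus R_F)$ has $\chi\le b$.
\begin{itemize}
\item If some $F$ has $\omega(F)=r$, take instead $R=\emptyset$. The recursion must then absorb such components, which is exactly what completeness prevents; with $\ell=t-3$, a vertex $q\in A$ having a neighbour in $F$, and $F$ containing an induced $P_3$ starting at a neighbour of $q$, we get an induced $P_t$ provided $F$ is not complete to $q$. So I would prove a short claim that each $q\in A$ with a neighbour in $F$ is complete to $F$, using the induced path $p_1\cdots p_{t-3}$ of length $t-3$ plus $q$ plus two vertices of $F$: that is $t-1$ vertices, so the claim must exploit $F$ being connected and of size at least~$2$ via a $P_3$ inside $F\cup\{q\}$.
\item If this fails, I would strengthen the induction so that, as in the introduction, the path passed to the base case already has $t-3$ vertices before $q$.
\end{itemize}
Once completeness holds, $\omega(F)<r$ and $\omega(A\cup\bigcup_F R_F)<r$: a clique in this union meets at most one $R_F$ together with vertices of $A$ complete to it, so its size stays below $\omega$ of a maximum clique containing $q$. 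The base case then follows.
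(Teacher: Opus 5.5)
\textbf{Gap in the base case.} Your Step~2 is correct and only cosmetically different from the paper. You put $\bigcup_F R_F$ into $R$ and pay for $A$ with $a$ colors in the complement. The paper instead takes $R=A$ and bounds each component $E$ of $D\setminus A$ by $\chi(R_E)+\chi(E\setminus R_E)\le a+(j-1)a+b$.

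The genuine gap is the base case $\ell=t-3$, which is where all the clique reduction happens. Your proposal leaves it conditional on the completeness claim, which you believe cannot be derived directly. That belief comes from a miscount: the vertices $p_1,\ldots,p_{t-3},q,f,f'$ number $(t-3)+3=t$, not $t-1$. Suppose $q\in A$ has a neighbour in a component $F$ of $D\setminus A$ but is not complete to $F$. Connectivity of $F$ gives adjacent $f,f'\in V(F)$ with $qf\in E(G)$ and $qf'\notin E(G)$. Then $p_1\cdots p_{t-3}qff'$ is an induced $P_t$, since $p_1,\ldots,p_{t-4}$ are anticomplete to $D$ and $f,f'\notin N(p_{t-3})$. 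So completeness holds outright, and no strengthening of the induction is needed; the path handed to the base case already has $t-3$ vertices before $q$. Your fallback of taking $R=\emptyset$ when some $\omega(F)=r$ would fail anyway, since it would demand $\chi(D)\le b$.

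Once you have completeness, make the clique count precise instead of appealing to a maximum clique containing $q$:
\begin{itemize}
\item Each $F$ has a neighbour in $A$, which is complete to $F$ and lies in $X$. Hence $\omega(F)<r$ and $R_F$ exists.
\item Let $Q\subseteq A\cup R_F$ be a clique meeting $R_F$. Then $Q\cap A$ is complete to $F$, so for a maximum clique $K$ of $F$ the set $(Q\cap A)\cup K$ is a clique of $G[X]$. This gives $|Q|<|Q\cap A|+|K|\le r$.
\item Cliques inside $A$ have fewer than $r$ vertices, because $p_{t-3}\in X$ extends them.
\end{itemize}
Distinct components are anticomplete, so every clique of $G[R]$ is of one of these two kinds. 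Moreover $D\setminus R=\bigcup_F(F\setminus R_F)$ is $b$-colorable. This is exactly the paper's base case.
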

\begin{proof}
  We use downward induction on $\ell$.
  Suppose first that $\ell=t-3$.
  Let $A=N(p_\ell)\cap V(D)$, and let $\mathcal F$ be the set of components
  of $D\setminus A$. Every vertex $u\in A$ is complete or anticomplete to
  each $F\in\mathcal F$: otherwise, since $F$ is connected, there are
  adjacent $x,y\in V(F)$ with $ux\in E(G)$ and $uy\notin E(G)$, and
  $p_1,\ldots,p_\ell,u,x,y$ induce a $P_t$, because $p_1,\ldots,p_{\ell-1}$
  are anticomplete to $D$ and $x,y\notin N(p_\ell)$.
  Since~$D$ is connected and $A\neq\emptyset$, each $F\in\mathcal F$ has a
  neighbor in $A$, which is then complete to $F$; hence $\omega(F)<r$.
  Also $\omega(G[A])<r$, because $A\cup\{p_\ell\}\subseteq X$ and $p_\ell$ is
  complete to $A$.

  For each $F\in\mathcal F$, choose a clique-reducing set $R_F$ of $F$ with
  $\chi(F\setminus R_F)\le b$, and let
  \[
    R=A\cup\bigcup_{F\in\mathcal F}R_F.
  \]
  We claim that $\omega(G[R])<r$. Let $Q$ be a clique of $G[R]$.
  If $Q\subseteq A$, then $|Q|<r$.
  Otherwise, because distinct components in $\mathcal F$ are anticomplete,
  $Q$ meets $R_F$ for exactly one $F\in\mathcal F$ and $Q\subseteq A\cup R_F$.
  Every vertex of $Q\cap A$ has a neighbor in $F$ and is therefore complete
  to~$F$. If $K$ is a maximum clique of $F$, then $(Q\cap A)\cup K$ is a
  clique of $G[X]$, so it has at most $r$ vertices, and
  $|Q\cap R_F|\le\omega(F[R_F])<\omega(F)=|K|$.
  Hence $|Q|<|(Q\cap A)\cup K|\le r$, proving the claim.
  The graph $D\setminus R$ is the disjoint union of the graphs
  $F\setminus R_F$ for $F\in\mathcal F$, which are pairwise anticomplete,
  so $\chi(D\setminus R)\le b=(t-\ell-3)a+b$.

  Now suppose that $\ell<t-3$, and let $R=N(p_\ell)\cap V(D)$.
  Then $\omega(G[R])<r$ because $R\cup\{p_\ell\}\subseteq X$.
  Suppose that $\chi(D\setminus R)>(t-\ell-3)a+b$, and let $E$ be a
  component of $D\setminus R$ with $\chi(E)>(t-\ell-3)a+b$.
  Since $D$ is connected and $R\neq\emptyset$, some $u\in R$ has a neighbor
  in $E$. Then $p_1\cdots p_\ell u$ is an induced path with $u\in X$, and
  $E$ is a nonempty connected induced subgraph of $G[X]$ disjoint from this
  path such that $u$ has a neighbor in $E$ and $p_1,\ldots,p_\ell$ are
  anticomplete to $E$. By the induction hypothesis for $\ell+1$, there is
  $R_E\subseteq V(E)$ with $\omega(G[R_E])<r$ and
  $\chi(E\setminus R_E)\le(t-\ell-4)a+b$.
  Since $\chi(G[R_E])\le a$, we get $\chi(E)\le(t-\ell-3)a+b$, a
  contradiction.
\end{proof}

\section{The covered-set lemma}\label{sec:covered}
We first record the elementary properties of the sequences in
\eqref{eq:pL}.
\begin{lemma}\label{lem:pL}
  For all $m\ge2$ we have $L_m=dL_{m-1}-L_{m-2}+1$, with $L_0=1$ and
  $L_1=d+1$.
  Moreover $0\le p_m\le L_m\le\sum_{i=0}^{m}d^{\,i}$ for all $m\ge0$, and
  \[
    \sum_{m\ge0}L_mz^m=\frac{1}{(1-z)(1-dz+z^2)}
    \qquad\text{for } |z|<\rho_d^{-1},
    \quad\text{where } \rho_d=\tfrac12\bigl(d+\sqrt{d^2-4}\bigr).
  \]
  In particular $L_m=\binom{m+2}{2}$ when $d=2$, and
  $L_m=O(\rho_d^{\,m})$ with $\rho_d<d$ when $d\ge3$.
\end{lemma}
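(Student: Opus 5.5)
The plan is to derive every claim in the statement from the linear recurrence $p_{j+1}=dp_j-p_{j-1}$ together with the definition $L_m=\sum_{j=1}^{m+1}p_j$.

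\emph{Recurrence for $L_m$ and initial values.} First I would record $p_2=d$, so that $L_0=p_1=1$ and $L_1=p_1+p_2=d+1$. For $m\ge2$ I would sum the recurrence $p_{j+1}=dp_j-p_{j-1}$ over $j=1,\dots,m$. The left side is $\sum_{j=2}^{m+1}p_j=L_m-1$. The right side is
\[
d\sum_{j=1}^{m}p_j-\sum_{j=0}^{m-1}p_j=dL_{m-1}-L_{m-2},
\]
where I use $p_0=0$. This gives $L_m=dL_{m-1}-L_{m-2}+1$.

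\emph{Inequalities.} I would first show by induction that $p_j\ge p_{j-1}\ge0$. Indeed,
\[
p_{j+1}-p_j=(d-1)p_j-p_{j-1}\ge p_j-p_{j-1}\ge0,
\]
since $d\ge2$. So $(p_j)$ is nonnegative and nondecreasing. Then $L_m\ge p_{m+1}\ge p_m\ge0$, because every summand of $L_m$ is nonnegative. For the upper bound I would show $p_j\le d^{\,j-1}$ for $j\ge1$. This holds because $p_{j+1}\le dp_j$ (as $p_{j-1}\ge0$), and induction then gives $p_j\le d^{\,j-1}$. Summing over $j=1,\dots,m+1$ yields $L_m\le\sum_{i=0}^m d^{\,i}$.

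\emph{Generating function.} Let $P(z)=\sum_{j\ge0}p_jz^j$. Multiplying the recurrence by $z^{j+1}$ and summing, the standard computation with $p_0=0$ and $p_1=1$ gives
\[
P(z)(1-dz+z^2)=z,
\qquad\text{so}\qquad
P(z)=\frac{z}{1-dz+z^2}.
\]
Next, $\sum_m L_mz^m=\sum_m z^m\sum_{j=1}^{m+1}p_j$. This equals $\frac{1}{z}\cdot\frac{P(z)}{1-z}$, because the partial sums of $p$ have generating function $P/(1-z)$ and $L_m$ is the partial sum up to index $m+1$. Substituting $P$ gives the formula $\frac{1}{(1-z)(1-dz+z^2)}$.

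\emph{Convergence.} The roots of $1-dz+z^2$ are $\rho_d^{-1}$ and $\rho_d$, since $\rho_d\cdot\rho_d^{-1}=1$ and $\rho_d+\rho_d^{-1}=d$. When $d\ge3$ we have $\rho_d>1$, so the smallest singularity is $\rho_d^{-1}<1$ and the series converges for $|z|<\rho_d^{-1}$. The coefficient bound $L_m\le\sum_i d^{\,i}$ already guarantees a positive radius of convergence, so the identity holds as analytic functions on that disc. When $d=2$ we have $\rho_d=1$ and the series converges for $|z|<1$.

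\emph{The case $d=2$.} The generating function becomes $(1-z)^{-3}$, whose coefficients are $\binom{m+2}{2}$. Alternatively, $p_j=j$ and $L_m=\sum_{j=1}^{m+1}j=\binom{m+2}{2}$.

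\emph{Growth for $d\ge3$.} The roots of $x^2-dx+1$ are $\rho_d$ and $\rho_d^{-1}$, and they are distinct. Partial fractions then give $p_j=(\rho_d^{\,j}-\rho_d^{-j})/\sqrt{d^2-4}$. Summing this geometric expression gives $L_m=O(\rho_d^{\,m})$, since $\rho_d>1$. Finally, $\rho_d<d$ because $\sqrt{d^2-4}<d$.

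The only step that needs care is the convergence and radius claim. It is handled by noting that the coefficients are nonnegative, so the radius of convergence equals the modulus of the nearest pole, which is $\rho_d^{-1}$.
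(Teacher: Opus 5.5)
Your proof is correct and follows essentially the same route as the paper. You sum $p_{j+1}=dp_j-p_{j-1}$ over $1\le j\le m$ (the paper's rearrangement is the same telescoping), use the same induction to show $(p_j)$ is nonnegative and nondecreasing with $p_j\le d^{\,j-1}$, and pass from $z/(1-dz+z^2)$ to the partial sums by dividing by $z(1-z)$, with the roots $\rho_d^{\pm1}$ governing convergence. The differences are cosmetic: you get $L_m=O(\rho_d^{\,m})$ from Binet's formula rather than from the pole structure. Also, convergence on $|z|<\rho_d^{-1}$ follows from the rational function being analytic on that disc, or directly from your bound $L_m=O(\rho_d^{\,m})$; nonnegativity of the coefficients plays no role there.
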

\begin{proof}
  Since $dp_j=p_{j+1}+p_{j-1}$ for $j\ge1$, we have
  $dL_{m-1}-L_{m-2}+1
  =\sum_{j=2}^{m+1}p_j+\sum_{j=0}^{m-1}p_j-\sum_{j=1}^{m-1}p_j+p_1=L_m$.
  The sequence $(p_j)$ is nondecreasing and nonnegative, since
  $p_{j+1}-p_j=(d-2)p_j+(p_j-p_{j-1})\ge0$ by induction, so
  $0\le p_m\le L_m$; and $p_{j+1}\le dp_j$ gives $p_j\le d^{\,j-1}$ and
  $L_m\le\sum_{i=0}^m d^{\,i}$.
  The generating function of $(p_j)$ is $z/(1-dz+z^2)$, and
  $\sum_m L_mz^m=\sum_{j\ge1}p_j\sum_{m\ge j-1}z^m
  =\frac{1}{1-z}\sum_{j\ge1}p_jz^{j-1}$.
  The roots of $1-dz+z^2$ are $\rho_d^{\pm1}$, and for $d=2$ the
  generating function is $(1-z)^{-3}$.
\end{proof}

Here is the covered-set lemma.
\begin{lemma}\label{lem:covered}
  Let $G$ be a $P_t$-free graph, let $A$ and $S$ be disjoint subsets of
  $V(G)$, let $q\ge1$ be an integer, and let $b,c$ be integers with
  $0\le b\le c$. Suppose that
  \begin{itemize}
    \item every vertex of $A$ has a neighbor in $S$,
    \item $\omega(N(s)\cap A)\le q$ for every $s\in S$, and
    \item every nonempty induced subgraph $H$ of $G[A]$ with
      $\omega(H)\le q$ satisfies $\chi(H)\le c$ and has a clique-reducing set
      $R_H$ with $\chi(H\setminus R_H)\le b$.
  \end{itemize}
  Define $\alpha_0=c$, $\beta_0=b$ and, for $m\ge1$,
  \begin{equation}\label{eq:alphabeta}
    \beta_m=c+(d-2)\alpha_{m-1}+\beta_{m-1},\qquad
    \alpha_m=\alpha_{m-1}+\beta_m.
  \end{equation}
  Then for every $m\ge0$, every nonempty induced subgraph $H$ of $G[A]$
  with $\omega(H)\le q+m$ satisfies $\chi(H)\le\alpha_m$ and has a
  clique-reducing set $R_H$ with $\chi(H\setminus R_H)\le\beta_m$.
  Moreover,
  \begin{equation}\label{eq:coveredexact}
    \alpha_m=L_mc-p_m\cdot (c-b).
  \end{equation}
  In particular, if $\omega(G[A])\le q+m$, then $\chi(G[A])\le L_mc$.
\end{lemma}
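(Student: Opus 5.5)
We prove the two bounds by induction on $m$, and then derive \eqref{eq:coveredexact} from the recurrence \eqref{eq:alphabeta}. The case $m=0$ is exactly the third hypothesis, because $\alpha_0=c$ and $\beta_0=b$. Before the inductive step we note two facts. First, $\beta_m\ge0$ for all $m$, since $d\ge2$ and $c,b\ge0$. Second, $(\alpha_m)$ and $(\beta_m)$ are nondecreasing: $\alpha_m=\alpha_{m-1}+\beta_m\ge\alpha_{m-1}$ because $\beta_m\ge0$, and $\beta_m-\beta_{m-1}=c+(d-2)\alpha_{m-1}\ge0$. Consequently, a subgraph $H$ with $\omega(H)<q+m$ is already covered by the induction hypothesis for $m-1$. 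So for $m\ge1$ we may assume $\omega(H)=q+m$.

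For the inductive step, first let $H$ be connected, nonempty, with $\omega(H)=q+m$. We run a rooted Gy\'arf\'as step that starts outside $A$.
\begin{enumerate}[label=(\arabic*)]
\item Pick any $v\in V(H)$. By the first hypothesis $v$ has a neighbor $s\in S$. Put $N=N(s)\cap V(H)$. Then $N\ne\varnothing$ and $\omega(N)\le q$, so $\chi(N)\le c$.
\item Let $D$ be a component of $H\setminus N$. Since $H$ is connected, some $u\in N$ has a neighbor in $D$. Since $s\notin A$, the path $s\,u$ is induced, and $s$ is anticomplete to $D$.
\item Apply \zcref{lem:prefix} with $X=V(H)$, $r=q+m$, $a=\alpha_{m-1}$, $b=\beta_{m-1}$, the path $p_1p_2=s\,u$ (so $\ell=2\le t-3$ because $t\ge5$, and $p_2=u\in X$), and the subgraph $D$. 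The lemma only needs $p_\ell\in X$, so $s\notin X$ causes no problem. Its hypotheses on subgraphs of clique number $<q+m$ hold by the induction hypothesis for $m-1$.
\item The lemma gives $R_D\subseteq V(D)$ with $\omega(R_D)<q+m$ and $\chi(D\setminus R_D)\le(t-5)\alpha_{m-1}+\beta_{m-1}=(d-2)\alpha_{m-1}+\beta_{m-1}$.
\item Set $R_H=\bigcup_D R_D$. Distinct components $D$ are anticomplete, so $\omega(R_H)<q+m=\omega(H)$, and $R_H$ is clique-reducing.
\item The complement satisfies $H\setminus R_H=N\cup\bigcup_D(D\setminus R_D)$. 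Hence $\chi(H\setminus R_H)\le c+(d-2)\alpha_{m-1}+\beta_{m-1}=\beta_m$.
\item Since $\omega(R_H)<q+m$, the induction hypothesis gives $\chi(R_H)\le\alpha_{m-1}$. So $\chi(H)\le\alpha_{m-1}+\beta_m=\alpha_m$.
\end{enumerate}
If $H$ is disconnected, treat each component separately. For a component $C$ with $\omega(C)=\omega(H)$, take its set $R_C$ from the connected case. For a component with smaller clique number, put all of it into $R$. The union is clique-reducing, because components are anticomplete, and the color counts combine by taking maxima over components.

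For \eqref{eq:coveredexact}, eliminate $\beta$ from \eqref{eq:alphabeta}. For $m\ge2$ we have $\beta_{m-1}=\alpha_{m-1}-\alpha_{m-2}$, and therefore $\alpha_m=d\alpha_{m-1}-\alpha_{m-2}+c$. Directly, $\alpha_0=c$ and $\alpha_1=c+c+(d-2)c+b=(d+1)c-(c-b)$. By \zcref{lem:pL} we have $L_0=1$, $L_1=d+1$ and $L_m=dL_{m-1}-L_{m-2}+1$, while $p_0=0$, $p_1=1$ and $p_m=dp_{m-1}-p_{m-2}$. So $L_mc-p_m(c-b)$ satisfies the same recurrence and the same initial values, and the identity follows by induction. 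The final statement then follows from $p_m\ge0$ and $c\ge b$.

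The main obstacle is step (3), namely fitting the situation to \zcref{lem:prefix}. The point is that the root $s\in S$ lies outside $X=V(H)$, and yet the path $s\,u$ counts as length $\ell=2$. This is exactly what turns the coefficient $t-\ell-3$ into $d-2$, rather than the $d-1$ we would get by starting the path inside $A$. One must check that the lemma never uses $p_1\in X$, and that the covering hypothesis $\omega(N(s)\cap A)\le q$ is what makes the first neighborhood cost only $c$ instead of $\alpha_{m-1}$.
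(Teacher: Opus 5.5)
Your proposal is correct and follows essentially the same route as the paper: you root the path at $s\in S$ outside $X=V(H)$, pay $c$ colors for $N(s)\cap V(H)$, apply the rooted clique-reduction lemma with $\ell=2$ to each component of $H\setminus N(s)$, take the union of the resulting sets, and obtain the closed form from $\alpha_m=d\alpha_{m-1}-\alpha_{m-2}+c$. The differences are only cosmetic: you apply that lemma to every component, whereas the paper sets $R_D=V(D)$ when $\omega(D)<r$ (harmless, since the lemma places no condition on $\omega(D)$), and you deduce the final bound directly from $p_m\ge0$ and $c\ge b$ instead of specializing to $b=c$.
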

\begin{proof}
  Both sequences in \eqref{eq:alphabeta} are nondecreasing.
  We proceed by induction on $m$; the case $m=0$ is the hypothesis.
  Let $m\ge1$ and let $H$ be a nonempty induced subgraph of $G[A]$ with
  $\omega(H)\le q+m$. If $\omega(H)<q+m$, then the claim follows from the
  induction hypothesis and monotonicity. If $H$ is disconnected, we apply
  the connected case to each component, take the union of the resulting
  clique-reducing sets, and reuse colors. So we may assume that $H$ is
  connected and $\omega(H)=r:=q+m$.
  Choose $s\in S$ having a neighbor in $V(H)$, and let $W=N(s)\cap V(H)$.
  Then $\omega(G[W])\le q$, so $\chi(G[W])\le c$.

  Let $D$ be a component of $H\setminus W$. If $\omega(D)<r$, let $R_D=V(D)$.
  Otherwise, since $H$ is connected and $W\neq\emptyset$, some $v_D\in W$
  has a neighbor in $D$. The vertices $s,v_D$ form an induced path with
  $v_D\in V(H)$, and $s$ is anticomplete to $D$ because $V(D) \cap N(s)=\emptyset$.
  By the induction hypothesis,
  every nonempty induced subgraph of $H$ with clique number less than
  $r=q+m$ 
is $\alpha_{m-1}$-colorable and has a
  clique-reducing set whose complement is $\beta_{m-1}$-colorable.
  Thus \zcref{lem:prefix} applies with $X=V(H)$, $\ell=2$, $a=\alpha_{m-1}$, and
  $b=\beta_{m-1}$, and yields $R_D\subseteq V(D)$ with
  \[
    \omega(G[R_D])<r
    \qquad\text{and}\qquad
    \chi(D\setminus R_D)\le(d-2)\alpha_{m-1}+\beta_{m-1}.
  \]

  Let $R$ be the union of the sets $R_D$ over all components $D$ of
  $H\setminus W$. Distinct components are anticomplete, so
  $\omega(G[R])<r=\omega(H)$, and $R$ is clique-reducing in $H$.
  Coloring $W$ with $c$ colors and the pairwise anticomplete graphs
  $D\setminus R_D$ with a common set of colors, we obtain
  \[
    \chi(H\setminus R)\le c+(d-2)\alpha_{m-1}+\beta_{m-1}=\beta_m.
  \]
  Since $\omega(G[R])\le q+m-1$, the induction hypothesis gives
  $\chi(G[R])\le\alpha_{m-1}$, and therefore
  $\chi(H)\le\alpha_{m-1}+\beta_m=\alpha_m$.

  Finally, $\alpha_1=dc+b$, and eliminating $\beta$ from
  \eqref{eq:alphabeta} gives $\alpha_m=d\alpha_{m-1}-\alpha_{m-2}+c$ for
  $m\ge2$. The right-hand side of \eqref{eq:coveredexact} has the same
  initial values and, by \zcref{lem:pL}, satisfies the same recurrence, which
  proves \eqref{eq:coveredexact}. For the last assertion take $b=c$, using
  the empty set as clique-reducing set in each nonempty graph of clique
  number at most~$q$.
\end{proof}

The essential feature is that $W$ is paid for in the complement of~$R$,
not retained in $R$. After deleting~$W$, different components can use
different vertices $v_D$, and their clique-reducing sets can be united
without creating a new maximum clique.
Without clique reduction, the argument would remove $d$ neighborhoods of
cost $\alpha_{m-1}$ each and give $\alpha_m\le c+d\alpha_{m-1}$, that is, a
covered set would cost about $d^{\,m}c$ instead of~$L_mc$.

\section{Proof of the main recurrence}\label{sec:rec}
\begin{proof}[Proof of \zcref{thm:rec}]
  We first check that the sequences are well behaved.
  By \zcref{lem:pL}, $L_m\ge p_m\ge0$, and by induction $F_{k-m-1}\ge F_{k-m-2}$,
  so every candidate in \eqref{eq:T} is nonnegative and $T_k\ge0$.
  Hence $F_k\ge dF_{k-1}$ and $B_k\ge(d-1)F_{k-1}\ge F_{k-1}\ge B_{k-1}$,
  while $B_k=F_k-F_{k-1}\le F_k$. Thus $(F_k)$ and $(B_k)$ are nondecreasing
  and $B_k\le F_k$ for all $k$.

  The second assertion follows from the first by induction on $k$: if $R$
  is a clique-reducing set of $G$ with $\chi(G\setminus R)\le B_k$, then
  $\omega(G[R])\le k-1$, so $\chi(G)\le\chi(G[R])+\chi(G\setminus R)
  \le F_{k-1}+B_k=F_k$.

  We prove the first assertion by induction on $k$; by what we just said,
  the induction hypothesis includes the bound $\chi(H)\le F_{\omega(H)}$
  for every induced subgraph $H$ of $G$ with $\omega(H)<k$.
  If $k=1$, then $G$ is edgeless and $R=\emptyset$ works, since
  $\chi(G)=1=B_1$.
  Let $k\ge2$. If $G$ is disconnected, we apply the connected case to each
  component with clique number $k$ and take $R$ to be the union of the
  resulting sets together with all components of smaller clique number.
  So we may assume that $G$ is connected.

  Let $u_1$ be a vertex of $G$ and let $R=N(u_1)$; then $\omega(G[R])<k$,
  and it remains to show that $\chi(G\setminus R)\le B_k$.

  Suppose that $\chi(G\setminus N(u_1))>B_k=(d-1)F_{k-1}+T_k$.
  By the induction hypothesis, every induced subgraph $H$ of $G$ with
  $\omega(H)<k$ satisfies $\chi(H)\le F_{k-1}$, and in particular
  $\chi(G[N(u)])\le F_{k-1}$ for every vertex $u$.
  By the Gy\'arf\'as path argument applied inside $G\setminus N(u_1)$, we
  obtain an induced path $u_1u_2\cdots u_{t-3}$
  and a connected induced subgraph~$D$ of $G\setminus N(u_1)$ such that
  \begin{itemize}
    \item $u_{t-3}$ has a neighbor in $D$,
    \item $D$ is disjoint from $\bigcup_{i=1}^{t-4} N(u_i)$, and
    \item $\chi(D)>B_k-(t-5)F_{k-1}=F_{k-1}+T_k$.
  \end{itemize}
  Let $A=V(D)\cap N(u_{t-3})$ and let $\mathcal F$ be the set of components
  of $D\setminus A$. Since $G$ is $P_t$-free, if a vertex~$a$ in $A$ has a
  neighbor in $F\in \mathcal F$, then $a$ is complete to $F$: otherwise $F$
  contains an edge $xy$ such that $a$ is adjacent to $x$ and non-adjacent to
  $y$, and $u_1,u_2,\ldots, u_{t-3},a,x,y$ induce a $P_t$.
  Since $D$ is connected and $A\neq\emptyset$, every $F\in\mathcal F$ has a
  vertex having a neighbor in $A$. Thus $\omega(F)\le k-1$ for every
  $F\in\mathcal F$, and $\omega(G[A])\le k-1$ because $A\subseteq N(u_{t-3})$.
  We now bound $\chi(D)$ in two ways.

  \emph{Clique reduction.}
  By \zcref{lem:prefix} applied with $X=V(G)$, $r=k$, $\ell=t-3$, the path
  $u_1\cdots u_{t-3}$, $a=F_{k-1}$ and $b=B_{k-1}$, whose hypotheses hold by
  induction, there is $R_D\subseteq V(D)$ with $\omega(G[R_D])<k$ and
  $\chi(D\setminus R_D)\le B_{k-1}$. Hence
  $\chi(D)\le F_{k-1}+B_{k-1}$.

  \emph{Large-small split.}
  Let $0\le m\le k-2$ and $q=k-m-1\ge1$.
  We say $F\in \mathcal F$ is \emph{large} if $\omega(F)>m$ and \emph{small}
  otherwise. Let $U$ be the set of vertices of $A$ having a neighbor in a
  large component. For each large component $F$ pick a vertex $s_F$ of $F$,
  and let $S$ be the set of these vertices. Then $S\subseteq V(D)\setminus A$
  is disjoint from $U$, and every vertex of $U$ is complete to some large
  component~$F$ and hence adjacent to~$s_F$.
  For each large $F$, every vertex of $N(s_F)\cap U$ is complete to~$F$, so
  each clique $Q$ of $G[N(s_F)\cap U]$ together with a maximum clique of~$F$
  forms a clique of $G$; hence $|Q|\le k-\omega(F)\le q$, that is,
  $\omega(N(s_F)\cap U)\le q$.
  Every nonempty induced subgraph $H$ of $G[U]$ with $\omega(H)\le q$
  satisfies, by induction, $\chi(H)\le F_q$ and has a clique-reducing set
  whose complement is $B_q$-colorable. Since $\omega(G[U])\le k-1=q+m$ and
  $F_q-B_q=F_{q-1}$, \zcref{lem:covered} with $c=F_q$ and $b=B_q$ gives
  \[
    \chi(G[U])\le L_mF_q-p_mF_{q-1}.
  \]
  In every small component $F$, choose by induction a clique-reducing set
  $R_F$ with $\chi(F\setminus R_F)\le B_m$; when $m=0$ there are no small
  components and $B_0=0$. Let
  \[
    Z=(A\setminus U)\cup\bigcup_{F\text{ large}}V(F)\cup\bigcup_{F\text{ small}}R_F.
  \]
  We claim that $\omega(G[Z])\le k-1$. The vertices of $A\setminus U$ are
  anticomplete to the large components, and distinct components are
  anticomplete. So a clique $Q$ of $G[Z]$ lies in $A\setminus U$, in a
  large component, or in $(A\setminus U)\cup R_F$ for a small component $F$.
  In the first two cases $|Q|\le k-1$. In the third case, as in the proof of
  \zcref{lem:prefix}, replacing $Q\cap R_F$ by a maximum clique of $F$ gives
  a clique of $G$ with more than $|Q|$ vertices, so $|Q|\le k-1$.
  Thus $\chi(G[Z])\le F_{k-1}$. The graphs $F\setminus R_F$ for small $F$
  are pairwise anticomplete, so together they are $B_m$-colorable.
  Therefore
  \[
    \chi(D)\le\chi(G[Z])+B_m+\chi(G[U])
    \le F_{k-1}+B_m+L_mF_q-p_mF_{q-1}.
  \]

  Taking the better of the two bounds over all choices of $m$, we obtain
  $\chi(D)\le F_{k-1}+T_k$, contradicting the choice of $D$.
\end{proof}

\section{Analysis of the recurrence}\label{sec:analysis}
Throughout this section, $(F_k)$ and $(B_k)$ are the sequences defined by
\eqref{eq:T}--\eqref{eq:FB}; recall from the proof of \zcref{thm:rec}
that they are nondecreasing with $B_k\le F_k$, and that
\begin{equation}\label{eq:Fd}
  F_k=dF_{k-1}+T_k\qquad(k\ge2).
\end{equation}

\subsection{The clique-reduction bound}
\label{rem:reduction}
What happens if we take only the first candidate in \eqref{eq:T}?
From $T_k\le B_{k-1}$, we deduce that 
$B_k\le(d-1)F_{k-1}+B_{k-1}$ and $F_k\le dF_{k-1}+B_{k-1}$.
The sequence defined by \[ 
a_0=0,\quad a_1=1,\quad 
\text{and} \quad 
a_k=(t-2)a_{k-1}-a_{k-2}
\] 
satisfies these relations with equality, because
$a_k-a_{k-1}=(d-1)a_{k-1}+(a_{k-1}-a_{k-2})$; so $F_k\le a_k$ and
$B_k\le a_k-a_{k-1}$ for all $k$. This is the bound obtained from clique
reduction alone, without the large-small split.
The characteristic roots of the recurrence are $\lambda_t^{\pm1}$, where
$\lambda_t=\tfrac12\bigl(t-2+\sqrt{t(t-4)}\bigr)<t-2$, so
$a_k=(\lambda_t^{k}-\lambda_t^{-k})/(\lambda_t-\lambda_t^{-1})$ and, for
$k\ge2$,
\[
  F_k\le a_k\le(t-2)a_{k-1}<(t-2)\frac{\lambda_t^{\,k-1}}{\sqrt{t(t-4)}}
  =\sqrt{1+\textstyle\frac{4}{t(t-4)}}\;\lambda_t^{\,k-1}.
\]

\subsection{A quadratic bound}
Since $B_m\le F_m$ and $p_mF_{q-1}\ge0$, the second candidate in
\eqref{eq:T} gives, for every $0\le m\le k-2$ with $q=k-m-1$,
\begin{equation}\label{eq:basicrec}
  F_k\le dF_{k-1}+F_m+L_mF_q\le dF_{k-1}+F_m+F_q\sum_{i=0}^{m}d^{\,i}.
\end{equation}

\begin{lemma}\label{lem:quadratic}
  For every $k\ge1$, $F_k\le\binom{k+1}{2}d^{\,k-1}$.
\end{lemma}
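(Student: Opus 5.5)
We prove the bound by induction on $k$, using the recurrence \eqref{eq:basicrec} with a well-chosen $m$ at each step. Set $f_k=F_k/d^{\,k-1}$, so the claim is $f_k\le\binom{k+1}{2}$. Since $\binom{k+1}{2}=\binom{k}{2}+k$, it suffices to show that the two extra terms in \eqref{eq:basicrec} contribute at most $k$ after dividing by $d^{\,k-1}$.

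\emph{Normalizing the recurrence.} Dividing \eqref{eq:basicrec} by $d^{\,k-1}$ gives, for $0\le m\le k-2$ and $q=k-m-1$,
\[
  f_k\le f_{k-1}+\frac{F_m}{d^{\,k-1}}+\frac{L_mF_q}{d^{\,k-1}}.
\]
By \zcref{lem:pL}, $L_m\le (d^{\,m+1}-1)/(d-1)<d^{\,m+1}/(d-1)$. By the induction hypothesis, $F_q\le\binom{q+1}{2}d^{\,q-1}$ and $F_m\le\binom{m+1}{2}d^{\,m-1}$. Since $m+q=k-1$, the third term is less than $\binom{q+1}{2}/(d-1)\le\binom{q+1}{2}$. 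The second term is at most $\binom{m+1}{2}d^{-(q+1)}\le\binom{k}{2}2^{-(q+1)}$. So it is enough to find $q\in\{1,\dots,k-1\}$ with
\[
  \binom{q+1}{2}+\frac{k(k-1)}{2^{\,q+2}}\le k .
\]

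\emph{Choosing $q$.} The natural choice is $q=\lceil\log_2 k\rceil$, which balances the two terms. Then $2^{q}\ge k$, so the second term is at most $(k-1)/4$. The first term is about $\tfrac12\log_2^2k+O(\log k)$, which is at most $\tfrac34 k$ once $k$ is moderately large, say $k\ge K_0$ for an explicit small $K_0$ found by direct estimation. Note that the extreme choices do not work. Taking $q=1$ (that is, $m=k-2$) leaves a quadratic term $\binom{k-1}{2}/d^2$. Taking $m$ small leaves a factor like $(d+1)/d$ on $f_{k-2}$. Hence the logarithmic choice is essential.

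\emph{Small $k$ — the main obstacle.} The remaining task is the finitely many $k<K_0$, where the crude inequality above may fail, and this is where I expect the real work. First, $k=1$ is immediate from $F_1=1$. Next, $F_2=d+1\le 3d$ follows by computing $T_2=1$ and $B_2=d$. For the other small $k$ I would use sharper information rather than the crude bounds:
\begin{itemize}
  \item the exact values $F_1=1$ and $F_2=d+1$;
  \item the bound $L_m<d^{\,m+1}/(d-1)$, which for $d\ge3$ halves the third term;
  \item the best $q$ for each individual $k$, for instance $q=1$ or $q=2$ for $k\le 6$.
\end{itemize}
If some value of $k$ still resists, the fallback is the clique-reduction estimate of \zcref{rem:reduction}, $F_k\le a_k\le(t-2)^{k-1}$. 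Since $(1+1/d)^{k-1}\le\binom{k+1}{2}$ for small $k$ when $d\ge2$, this settles the remaining base cases. Finally, I would check that the threshold $K_0$ and the base cases overlap, so that the induction is complete.
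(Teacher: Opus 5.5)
Your outline is the same as the paper's, and each of your individual estimates is correct: strong induction on $k$, divide \eqref{eq:basicrec} by $d^{k-1}$, bound the first term by $\binom{k}{2}$ and the third by $\binom{q+1}{2}$, and let $q$ grow slowly so the middle term is negligible. The gap is the part you flag yourself. You never determine $K_0$ and never check the cases $k<K_0$, so as written this is not yet a proof.

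The check is genuinely needed. With the fixed choice $q=\lceil\log_2 k\rceil$, your reduced inequality $\binom{q+1}{2}+k(k-1)/2^{q+2}\le k$ is false for several $k$ up to $k=17$; for $k=17$ it reads $15+272/128>17$. Your fallback $f_k\le(1+1/d)^{k-1}$, obtained from $F_k\le a_k$, does not rescue this: for $d=2$ we have $(3/2)^{11}>\binom{13}{2}$, so it only covers $k\le 11$. The gap does close, though:
\begin{itemize}
  \item for all $k\ge 20$, $q=\lceil\log_2 k\rceil$ gives $\binom{q+1}{2}\le(3k+1)/4$, and with the middle term at most $(k-1)/4$ your inequality follows;
  \item for $2\le k\le 19$, the choice $q=\lfloor\sqrt k\rfloor$ satisfies it directly, e.g.\ $k=17$, $q=4$ gives $10+272/64<17$.
\end{itemize}
You do have to carry this out.

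Your remark that the logarithmic choice is essential is also mistaken, and the paper's different choice is exactly what removes all base cases. The paper takes $q=\lfloor\sqrt k\rfloor$ and bounds the middle term more carefully than you do. Instead of replacing $\binom{m+1}{2}$ by $\binom{k}{2}$, it uses $m<q^2+q\le 2^{q+1}$ to get $\binom{m+1}{2}/d^{\,q+1}\le (m+1)/2$. Since $m+1=k-q$, the three contributions then sum to less than $\binom{k}{2}+\frac{(k-q)+q(q+1)}{2}=\binom{k}{2}+\frac{k+q^2}{2}$. This is at most $\binom{k+1}{2}$ because $q^2\le k$. The argument is uniform for every $k\ge 2$, and $F_1=1$ is the only base case.
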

\begin{proof}
  Let $G_0=0$ and $G_j=\binom{j+1}{2}d^{\,j-1}$ for $j\ge1$, a nondecreasing
  sequence with $G_1=1=F_1$.
  We prove $F_k\le G_k$ by induction on $k$; let $k\ge2$ and
  suppose that $F_j\le G_j$ for $j<k$.
  Let $q=\lfloor\sqrt{k}\rfloor$ and $m=k-1-q$.
  Since $k\ge2$, we have $1\le q\le k-1$, so $0\le m\le k-2$, and
  \eqref{eq:basicrec} gives
  \[
    \frac{F_k}{d^{\,k-1}}
    \le\frac{G_{k-1}}{d^{\,k-2}}+\frac{G_m}{d^{\,k-1}}
    +\frac{G_q}{d^{\,k-1}}\sum_{i=0}^m d^{\,i}.
  \]
  First, $G_{k-1}/d^{\,k-2}=\binom{k}{2}$.

  Second, since $q>\sqrt k-1$, we have $k<(q+1)^2$ and therefore
  $m=k-1-q<(q+1)^2-1-q=q^2+q\le 2^{q+1}$,
  where the last inequality holds for all integers $q\ge1$
  because it holds for $q\in\{1,2\}$
  and $\frac{(x+1)^2+(x+1)}{x^2+x}=\frac{x+2}{x}\le 2$ for all $x\ge2$.
  As $d\ge2$, we deduce that
  \[
    \frac{G_m}{d^{\,k-1}}
    =\frac{m(m+1)}{2d^{\,q+1}}
    \le\frac{m+1}{2}.
  \]

  Third, since $d-1\ge1$ and $q+m=k-1$, we have
  \[
    \frac{G_q}{d^{\,k-1}}\sum_{i=0}^m d^{\,i}
    =\binom{q+1}{2}
    \frac{1}{d^{m+1}}
    \sum_{i=0}^m d^{\,i}
    < 
    \frac{q(q+1)}{2}.
  \]

  Summing up, and using $q^2\le k$, we obtain
  \[
    \frac{F_k}{d^{\,k-1}}
    <\binom{k}{2}+\frac{m+1+q(q+1)}{2}
    =\binom{k}{2}+\frac{k+q^2}{2}
    \le\binom{k+1}{2}.\qedhere
  \]
\end{proof}

\subsection{Combining two bounds}
\begin{proof}[Proof of \zcref{thm:main}]
  By \zcref{thm:rec} it suffices to show $F_k\le \kappa_dd^{\,k-1}$ for all
  $k\ge1$. Let $h_k=F_k/d^{\,k-1}$ for $k\ge1$; thus $h_1=1$.
  For $k\ge2$, choose
  \[
    m=\Bigl\lfloor\frac{k-1}{2}\Bigr\rfloor,\qquad
    q=\Bigl\lceil\frac{k-1}{2}\Bigr\rceil,
  \]
  so that $0\le m\le k-2$ and $q=k-m-1\ge1$.
  Dividing \eqref{eq:basicrec} by $d^{\,k-1}$ and applying
  \zcref{lem:quadratic} to $F_m$ and $F_q$, we obtain
  \begin{equation}\label{eq:increment}
    h_k\le h_{k-1}
    +\frac{\binom{m+1}{2}}{d^{\,q+1}}
    +\frac{L_m\binom{q+1}{2}}{d^{\,m+1}}.
  \end{equation}
  For $k=2j+1$ and $k=2j+2$ we have $m=j$, while $q=j$ and $q=j+1$
  respectively; the middle term of \eqref{eq:increment} vanishes when $m=0$.
  Summing \eqref{eq:increment} over $2\le k'\le k$ therefore gives
  \begin{align*}
    h_k&\le 1+\sum_{j\ge1}\binom{j+1}{2}\Bigl(\frac1{d^{\,j+1}}+\frac1{d^{\,j+2}}\Bigr)
    +\sum_{j\ge0}\frac{L_j\bigl(\binom{j+1}{2}+\binom{j+2}{2}\bigr)}{d^{\,j+1}}
    \\
    &= 
    1+ \frac12 \sum_{j\ge1}\left(\frac{(j+1)j}{d^{j+2}}
    -\frac{j(j-1)}{d^{j+1}}\right)
    + \sum_{j\ge1}\frac{j^2}{d^{j+1}}
    + \sum_{j\ge0} \frac{(j+1)^2 L_j}{d^{j+1}}
    \\ 
    &=
    1+\frac{d+1}{(d-1)^3}
    +\sum_{j\ge0}\frac{(j+1)^2L_j}{d^{\,j+1}},
  \end{align*}
  where we used $\binom{j+1}{2}+\binom{j+2}{2}=(j+1)^2$, the fact that the
  first sum in the second line telescopes to zero, and
  $\sum_{j\ge1}j^2/d^{\,j}=d(d+1)/(d-1)^3$.
  Both series converge: the first equals $(d+1)/(d-1)^3$, and the second
  converges because, by \zcref{lem:pL}, the generating function
  $H_d(z)=\sum_mL_mz^m$ has radius of convergence $\rho_d^{-1}>d^{-1}$
  (for $d=2$, $H_2(z)=(1-z)^{-3}$ has radius $1$).
  Moreover
  \[
    \sum_{j\ge0}\frac{(j+1)^2}{d^{\,j+1}}L_j
    = 
    \sum_{j\ge0}\frac{1+3j+j(j-1)}{d^{\,j+1}}L_j
    =\frac1d\Bigl(H_d(z)+3zH_d'(z)+z^2H_d''(z)\Bigr)\Big|_{z=1/d}.
  \]
  Evaluating this rational function 
  from \zcref{lem:pL} gives
  \[
    h_k\le 1+\frac{d+1}{(d-1)^3}+\sum_{j\ge0}\frac{(j+1)^2L_j}{d^{\,j+1}}
    =2d^5+2d^4-3d^3-d^2+2d+4+\frac{6d^2-6d+4}{(d-1)^3}=\kappa_d.
  \]
  Finally, $(6d^2-6d+4)/(d-1)^3\le16$ for $d\ge2$, and
  $2d+20\le3d^3+d^2$, so $\kappa_d\le2d^5+2d^4\le 3d^5$.
\end{proof}

\section*{Statement of AI use}
Anthropic's Claude Fable 5.1 produced the clique-reduction argument
presented in \zcref{lem:prefix} and \zcref{rem:reduction} after it read
notes generated, at the author's direction, by OpenAI's Codex Astra over
several failed attempts.
OpenAI's GPT Pro produced the large-small split argument presented here
when the author provided an earlier version of this paper containing the
clique-reduction argument. The large-small decomposition has the earlier
$P_5$-free antecedent in Scott, Seymour, and
Spirkl~\cite[Lemma~2.1]{SSS2023} discussed in the introduction.
GPT Pro produced a proof combining the two arguments.
The author simplified the proofs and rewrote the arguments.
The author takes full responsibility for the mathematical correctness.

\section*{Acknowledgements}
The author would like to thank Tung H. Nguyen for his useful remarks on earlier versions of the manuscript.

\bibliographystyle{amsplain}
\providecommand{\bysame}{\leavevmode\hbox to3em{\hrulefill}\thinspace}
\providecommand{\MR}{\relax\ifhmode\unskip\space\fi MR }
\providecommand{\MRhref}[2]{\href{http://www.ams.org/mathscinet-getitem?mr=#1}{#2}
}
\providecommand{\href}[2]{#2}

\end{document}